\begin{document}
\mainmatter
\title{A reduced scalar potential approach for magnetostatics avoiding the coenergy}

\titlerunning{A modified reduced scalar potential approach}
\author{Herbert Egger\inst{1,2}, Felix Engertsberger\inst{1} \and
Bogdan Radu\inst{2}}
\authorrunning{Herbert Egger et al.}
\institute{Institute for Numerical Mathematics, Johannes-Kepler University Linz, Austria\\
\email{herbert.egger@jku.at}, \email{felix.engertsberger@jku.at},
\and Johann Radon Institute for Computational and Applied Mathematics, Linz, Austria\\
\email{bogdan.radu@ricam.oeaw.ac.at}
}

\maketitle

\begin{abstract}
The numerical solution of problems in nonlinear magnetostatics is typically based on a variational formulation in terms of magnetic potentials, the discretization by finite elements, and iterative solvers like the Newton method.  
The vector potential approach aims at minimizing a certain energy functional and, in three dimensions, requires the use of edge elements and appropriate gauging conditions.
The scalar potential approach, on the other hand, seeks to maximize the negative coenergy and can be realized by standard Lagrange finite elements, thus reducing the number of degrees of freedom and simplifying the implementation. 
The number of Newton iterations required to solve the governing nonlinear system, however, has been observed to be usually higher than for the vector potential formulation. 
In this paper, we propose a method that combines the advantages of both approaches, i.e., it requires as few Newton iterations as the vector potential formulation while involving the magnetic scalar potential as the primary unknown. 
We discuss the variational background of the method, its well-posedness, and its efficient implementation. 
Numerical examples are presented for illustration of the accuracy and the gain in efficiency compared to other approaches.
\end{abstract}

\section{Introduction and motivation}
The equilibrium distributions of the magnetic fields generated by an imposed current density $\mathbf{j}_s = \operatorname{curl} \mathbf{h}_s$ in a nonlinear magnetically isolated medium satisfy
\begin{alignat}{2}
\partial_\mathbf{b} w(\mathbf{b}) + \nabla \psi &= \mathbf{h}_s \qquad && \text{in } \Omega, \label{radu:eq:1} \\
\operatorname{div} \mathbf{b} &= 0 \qquad && \text{in } \Omega \label{radu:eq:2}\\
\mathbf{b} \cdot \mathbf{n} &= 0 \qquad && \text{on } \partial\Omega. \label{radu:eq:3}
\end{alignat}
Here $\mathbf{b}$ is the magnetic flux density, $\mathbf{h}_s$ the source field, and $\psi$ the magnetic scalar potential. The magnetic field intensity $\mathbf{h}=\partial_b w(\mathbf{b})$ is obtained as the derivative of the magnetic energy density $w(\mathbf{b})=w(\mathbf{b},x)$, which describes the local nonlinear material response~\cite{radu:Silvester1991} and may in general depend on the spatial position. 
The first equation thus amounts to the decomposition 
$\mathbf{h} = \mathbf{h}_s - \nabla \psi$
and Ampere's law $\operatorname{curl} \mathbf{h} = \mathbf{j}_s$ can be recovered by application of the $\operatorname{curl}$ operator.

\subsection*{Vector potential formulation}
The magnetic Gauß law~\eqref{radu:eq:2} can be satisfied by representing $\mathbf{b} = \operatorname{curl} \mathbf{a}$ in terms of a magnetic vector potential. The remaining equations can then be restated as
\begin{alignat}{2}
\operatorname{curl} (\partial_\mathbf{b} w(\operatorname{curl} \mathbf{a})) &= \mathbf{j}_s \qquad && \text{in } \Omega \label{radu:eq:4} \\
\mathbf{a} \times \mathbf{n} &= 0 \qquad && \text{on } \partial\Omega. \label{radu:eq:5}
\end{alignat}
We tacitly assumed that the boundary $\partial\Omega$ is simply connected and further applied the $\operatorname{curl}$ operator to \eqref{radu:eq:1} as before. Hence \eqref{radu:eq:4} amounts to Ampere's law stated in terms of the magnetic vector potential. 
The weak formulation of this problem is well-suited for analysis and discretization by $H(\operatorname{curl})$-conforming finite elements~\cite{radu:Meunier,radu:Nedelec1980}, and the resulting nonlinear systems can be solved efficiently by Newton-type methods~\cite{radu:Egger2024}. 
Appropriate gauging is required to ensure the uniqueness of the vector potential on the continuous and the discrete level~\cite{radu:Albanese1997,radu:Hiptmair2002}. 

\subsection*{Scalar potential formulation}
Let $w_*(\mathbf{h}) = \sup_\mathbf{b}\{\langle \mathbf{h},\mathbf{b}\rangle  - w(\mathbf{b})\}$ denote the convex conjugate coenergy density. This allows to recast the material law $\mathbf{h} = \partial_\mathbf{b} w(\mathbf{b})$ in inverse form $\mathbf{b} = \partial_\mathbf{h} w_*(\mathbf{h})$. Using $\mathbf{h}  = \mathbf{h}_s - \nabla \psi$, which also encodes Ampere's law, and invoking the magnetic Gau{\ss} law~\eqref{radu:eq:2} then leads to the nonlinear scalar potential formulation
\begin{alignat}{2}
\operatorname{div} (\partial_\mathbf{h} w_*(\mathbf{h}_s - \nabla \psi)) &= 0 \qquad && \text{in } \Omega \label{radu:eq:6} \\
\mathbf{b} \cdot \mathbf{n} &= 0 \qquad && \text{on } \partial\Omega. \label{radu:eq:7}
\end{alignat}
The weak form of this problem is again well-suited for the analysis of the problem. $H^1$-conforming finite elements can now be used for the systematic discretization and Newton-type methods can be employed for the iterative solution of the resulting nonlinear systems. We refer to \cite{radu:Engertsberger2023,radu:Meunier} for details and further information. 

\begin{remark}
In three space dimensions, the scalar potential approach achieves the same approximation order with much fewer degrees of freedom and, additionally, avoids the need for a complex gauging procedure. 
The two formulations mentioned above, however, involve different nonlinear functions and the vector potential approach seems to require fewer Newton iterations in typical cases~\cite{radu:Dular2020}. 
\end{remark}

\subsection*{Outline of the main results}
We start from the mixed formulation \eqref{radu:eq:1}--\eqref{radu:eq:3}, but instead of treating \eqref{radu:eq:1} in strong form, like in previous works~\cite{radu:Dular1997,radu:Meunier}, we here use the weak form of this constraint and keep the scalar potential term $\nabla \psi$ in \eqref{radu:eq:1} in strong form. 
The magnetic Gauß law~\eqref{radu:eq:2}, on the other hand, is treated weakly. 
Then $\mathbf{b}$ can be approximated by discontinuous finite elements which allows to eliminate this variable locally. 
A key point of our approach is to do this only after linearization, i.e., in the solution of the linear saddle point problems arising from the Newton iteration.
Overall, we obtain a method that uses the same linearization $\partial_\mathbf{b} w(\mathbf{b})$ of the energy density as the vector potential formulation, while the linear systems to be solved in every step of the Newton iteration are of the same structure and can be solved at the same cost as those of the scalar potential formulation.
In summary, this leads to a modified scalar potential formulation with reduced computational cost.

\section{Preliminaries}

Let $\Omega \subset \mathbb{R}^3$ be a bounded Lipschitz domain and simply connected. 
Standard notation for Lebesgue and Sobolev spaces is used throughout the text; see \cite{radu:Monk} for details. 
We write $\langle a,b\rangle$ for the scalar product in $\mathbb{R}^3$ and $\langle a,b\rangle_\Omega = \int_\Omega \langle a,b\rangle \, dx$ for the $L^2$-scalar product on $\Omega$. 
We further assume that 
$w:\mathbb{R}^3\times\Omega \to \mathbb{R}$ is piecewise smooth in the first argument and smooth in the second with strongly monotone and Lipschitz continuous derivative, i.e., 
\begin{alignat}{2}
\langle\partial_{\mathbf{b}} w(x,\mathbf{b})-\partial_{\mathbf{b}} w(x,\mathbf{b}'),\mathbf{b}-\mathbf{b}'\rangle & \ge \gamma |\mathbf{b}-\mathbf{b}'|^2, \label{radu:eq:wa1} \\
|\partial_{\mathbf{b}} w(x,\mathbf{b})-\partial_{\mathbf{b}} w(x,\mathbf{b}')| &\le L |\mathbf{b}-\mathbf{b}'| \label{radu:eq:wa2}
\end{alignat}
for all $x \in \Omega$ and $\mathbf{b},\mathbf{b}' \in \mathbb{R}^3$ with uniform constants $L, \gamma>0$.
We will not mention the spatial dependence explicitly in the remainder of the manuscript.
As a final assumption, we require $\mathbf{h}_s \in H(\operatorname{curl};\Omega)$ and set $\mathbf{j}_s = \operatorname{curl} \mathbf{h}_s$.

\subsection*{Well-posedness and equivalence}
For completeness of the presentation, let us summarize some basic facts about the well-posedness and mutual connections of the different formulations. 

\begin{lemma}
Under the above assumptions, the problem \eqref{radu:eq:1}--\eqref{radu:eq:3} has a unique solution $(\mathbf{b},\psi)$ and $\mathbf{h}=\mathbf{h}_s - \nabla \psi$ satisfies $\operatorname{curl} \mathbf{h} = \mathbf{j}_s$. 
Moreover, $\psi$ is also the unique solution of \eqref{radu:eq:6}--\eqref{radu:eq:7} and any $\mathbf{a}$ solving \eqref{radu:eq:4}--\eqref{radu:eq:5} also satisfies $\mathbf{b} = \operatorname{curl} \mathbf{a}$.
\end{lemma}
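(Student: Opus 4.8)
The plan is to treat \eqref{radu:eq:1}--\eqref{radu:eq:3} as a linear saddle-point problem with a monotone nonlinearity and to deduce the remaining claims from the uniqueness of its solution together with the convex duality between $w$ and $w_*$.

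First I would set up the weak form of \eqref{radu:eq:1}--\eqref{radu:eq:3}: find $\mathbf{b}\in L^2(\Omega)^3$ and $\psi\in H^1(\Omega)/\mathbb{R}$ such that
\begin{align*}
\langle \partial_\mathbf{b} w(\mathbf{b}),\mathbf{v}\rangle_\Omega + \langle \nabla\psi,\mathbf{v}\rangle_\Omega &= \langle \mathbf{h}_s,\mathbf{v}\rangle_\Omega, \\
\langle \mathbf{b},\nabla\phi\rangle_\Omega &= 0
\end{align*}
for all $\mathbf{v}\in L^2(\Omega)^3$ and $\phi\in H^1(\Omega)$; here the second identity is precisely the weak reading of \eqref{radu:eq:2}--\eqref{radu:eq:3}. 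The bilinear form $(\mathbf{v},\phi)\mapsto\langle\mathbf{v},\nabla\phi\rangle_\Omega$ satisfies an inf-sup condition, as seen by testing with $\mathbf{v}=\nabla\phi$ and using the Poincar\'e--Wirtinger inequality, and its kernel $V$ consists of the $L^2$ vector fields that are divergence free with vanishing normal trace. By \eqref{radu:eq:wa1}--\eqref{radu:eq:wa2}, $\mathbf{b}\mapsto\langle\partial_\mathbf{b}w(\mathbf{b}),\cdot\,\rangle_\Omega$ is strongly monotone and Lipschitz on $L^2(\Omega)^3$, so the reduced problem on $V$ has a unique solution $\mathbf{b}\in V$ by standard monotone-operator theory. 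Using the Helmholtz decomposition $L^2(\Omega)^3=V\oplus\nabla H^1(\Omega)$, the residual $\mathbf{h}_s-\partial_\mathbf{b}w(\mathbf{b})$, which annihilates $V$, is a gradient $\nabla\psi$ with $\psi\in H^1(\Omega)/\mathbb{R}$ unique; this gives existence and uniqueness for \eqref{radu:eq:1}--\eqref{radu:eq:3}. Applying $\operatorname{curl}$ to $\mathbf{h}=\mathbf{h}_s-\nabla\psi$ in the distributional sense and using $\operatorname{curl}\nabla\psi=0$ shows $\operatorname{curl}\mathbf{h}=\operatorname{curl}\mathbf{h}_s=\mathbf{j}_s$.

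Next, for the equivalence with \eqref{radu:eq:6}--\eqref{radu:eq:7}, I would note that \eqref{radu:eq:wa1}--\eqref{radu:eq:wa2} amount to $\gamma$-strong convexity of $\mathbf{b}\mapsto w(\mathbf{b})$ and $L$-Lipschitz continuity of $\partial_\mathbf{b}w$, so that $w_*$ is differentiable with $\partial_\mathbf{h}w_*=(\partial_\mathbf{b}w)^{-1}$, again strongly monotone and Lipschitz. Hence $\mathbf{h}=\partial_\mathbf{b}w(\mathbf{b})$ is equivalent to $\mathbf{b}=\partial_\mathbf{h}w_*(\mathbf{h})$, and substituting $\mathbf{h}=\mathbf{h}_s-\nabla\psi$ transforms \eqref{radu:eq:1}--\eqref{radu:eq:3} into \eqref{radu:eq:6}--\eqref{radu:eq:7} and conversely; since the map $(\mathbf{b},\psi)\mapsto\psi$ is a bijection between the solution sets, uniqueness carries over. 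Alternatively, \eqref{radu:eq:6}--\eqref{radu:eq:7} is the Euler--Lagrange equation of a strictly convex coenergy functional.

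Finally, given any $\mathbf{a}$ solving \eqref{radu:eq:4}--\eqref{radu:eq:5}, set $\mathbf{b}'=\operatorname{curl}\mathbf{a}$. Then $\operatorname{div}\mathbf{b}'=0$ automatically, and $\mathbf{a}\times\mathbf{n}=0$ on $\partial\Omega$ forces $\mathbf{b}'\cdot\mathbf{n}=0$ there, since the normal trace of a curl is controlled by the tangential trace. From \eqref{radu:eq:4} we get $\operatorname{curl}(\partial_\mathbf{b}w(\mathbf{b}')-\mathbf{h}_s)=0$, and since $\Omega$ is simply connected this curl-free field is a gradient, $\partial_\mathbf{b}w(\mathbf{b}')-\mathbf{h}_s=-\nabla\psi'$ for some $\psi'\in H^1(\Omega)$. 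Thus $(\mathbf{b}',\psi')$ solves \eqref{radu:eq:1}--\eqref{radu:eq:3}, and the uniqueness shown above yields $\mathbf{b}'=\mathbf{b}$, i.e.\ $\mathbf{b}=\operatorname{curl}\mathbf{a}$. The step I expect to be the main obstacle is the functional-analytic foundation in the first paragraph: stating the weak formulations precisely and justifying, on a general bounded Lipschitz domain, the Helmholtz decomposition with the correct boundary conditions, the compatible trace theory for $H(\operatorname{curl};\Omega)$ and $H(\operatorname{div};\Omega)$, and the two places where simple connectedness of $\Omega$ (and of $\partial\Omega$) is used to pass between curl-free fields and gradients and between divergence-free fields with vanishing normal trace and curls.
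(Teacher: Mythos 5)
Your argument is correct, and it is more detailed than what the paper actually provides: the paper gives no proof of this lemma at all, merely pointing to Engertsberger's thesis for the well-posedness of the scalar potential equation \eqref{radu:eq:6}--\eqref{radu:eq:7} and leaving "the remaining details to the reader." The route the paper implicitly suggests is therefore the reverse of yours: first establish existence and uniqueness for the reduced scalar potential problem (a standard strongly monotone, Lipschitz elliptic equation in $H^1(\Omega)/\mathbb{R}$, or equivalently the strictly convex coenergy maximization), then define $\mathbf{b}=\partial_\mathbf{h} w_*(\mathbf{h}_s-\nabla\psi)$ and verify that $(\mathbf{b},\psi)$ solves \eqref{radu:eq:1}--\eqref{radu:eq:3}, transferring uniqueness back via the duality $\partial_\mathbf{h} w_*=(\partial_\mathbf{b} w)^{-1}$. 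You instead attack the mixed problem directly as a constrained minimization / saddle-point problem on $L^2(\Omega)^3\times H^1(\Omega)/\mathbb{R}$, using the inf-sup condition, the Helmholtz decomposition $L^2(\Omega)^3 = \nabla H^1(\Omega)\oplus\{\mathbf{v}:\operatorname{div}\mathbf{v}=0,\ \mathbf{v}\cdot\mathbf{n}=0\}$, and monotone-operator theory on the kernel, and then obtain the scalar potential formulation as a corollary. Both routes are sound; yours has the advantage of being the exact continuous analogue of the paper's own proof of the discrete Lemma~2 (constrained minimization of $J_h$ plus a compatibility condition for the multiplier), which makes the discrete scheme's consistency with the continuous problem transparent, whereas the paper's route leans on an external reference and avoids the Helmholtz machinery. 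Your handling of the vector potential claim (normal trace of $\operatorname{curl}\mathbf{a}$ vanishing via $\langle\operatorname{curl}\mathbf{a},\nabla\varphi\rangle_\Omega=\int_{\partial\Omega}(\mathbf{a}\times\mathbf{n})\cdot\nabla\varphi$, then simple connectedness to write the curl-free residual as a gradient, then uniqueness of $\mathbf{b}$) is exactly what is needed, and you correctly identify the two places where topological assumptions on $\Omega$ enter. The only point worth tightening is the remark that the $\psi'$ produced in the last step is determined only up to a constant, which is harmless since the conclusion concerns the $\mathbf{b}$-component alone.
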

The main arguments to prove existence of a unique solution for the scalar potential equation can be found in \cite{radu:Engertsberger2023}. The remaining details are left to the reader.

\subsection*{Discrete mixed problem}

Let $\mathcal{T}_h$ be a tetrahedral mesh of $\Omega$ satisfying the usual regularity properties~\cite{radu:Ciarlet,radu:Monk}. 

We will make use of the finite element spaces
\begin{alignat}{2}
Q_h &= P_{p-1}(\mathcal{T}_h)^d 
\quad \text{and} \quad 
& V_h &= P_p(\mathcal{T}_h) \cap H^1(\Omega)/\mathbb{R},
\end{alignat}
consisting of all vector-valued piecewise polynomials of degree $\le p-1$ and all continuous piecewise polynomials of degree $\le p$ having zero average, respectively.
We further denote by $\langle a,b\rangle_h = \sum_T \langle a,b\rangle_{T,h}$ an approximation of the standard $L^2$-scalar product $\langle a,b\rangle_\Omega$ obtained by a local quadrature rule applied on every element. This quadrature rule is assumed to have positive weights and to integrate polynomials of degree $\le 2p-2$ exactly. 

For the discretization of \eqref{radu:eq:1}--\eqref{radu:eq:3}, we then consider the following method.
\begin{problem} \label{radu:prob:1}
Find $\mathbf{b}_h \in Q_h$ and $\psi_h \in V_h$ such that 
\begin{alignat}{2}
\langle \partial_\mathbf{b} w(\mathbf{b}_h), \mathbf{b}_h' \rangle_h + \langle \nabla \psi_h', \mathbf{b}_h'\rangle_h &= \langle \mathbf{h}_s, \mathbf{b}_h' \rangle_h \qquad &&  \forall \mathbf{b}_h \in Q_h \label{radu:eq:11}\\
\langle \mathbf{b}_h, \nabla \psi_h\rangle_h &= 0 \qquad &&  \forall \psi_h' \in V_h. \label{radu:eq:12}
\end{alignat}
\end{problem}
With similar arguments as for the continuous problem, we obtain the following.
\begin{lemma}
Under the above assumptions, Problem~\ref{radu:prob:1} admits a unique solution. 
\end{lemma}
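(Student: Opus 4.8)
The plan is to recognize Problem~\ref{radu:prob:1} as a nonlinear saddle-point system of the form $\langle \partial_\mathbf{b} w(\mathbf{b}_h),\mathbf{b}_h'\rangle_h + b(\psi_h,\mathbf{b}_h') = \langle \mathbf{h}_s,\mathbf{b}_h'\rangle_h$ and $b(\psi_h',\mathbf{b}_h)=0$, where $b(\psi_h',\mathbf{b}_h') := \langle \nabla\psi_h',\mathbf{b}_h'\rangle_h$, and to apply the standard theory for such problems (Brezzi-type, in its nonlinear/monotone variant). First I would equip $Q_h$ and $V_h$ with the discrete norms induced by $\langle\cdot,\cdot\rangle_h$ and the $H^1$-seminorm, respectively; note that since the quadrature is exact for polynomials of degree $\le 2p-2$ and $Q_h$ consists of polynomials of degree $\le p-1$, the form $\langle\cdot,\cdot\rangle_h$ reproduces the exact $L^2$-inner product on $Q_h\times Q_h$, hence $\langle\cdot,\cdot\rangle_h$ is a genuine inner product on $Q_h$ and $|\mathbf{b}_h|_h=\|\mathbf{b}_h\|_{L^2(\Omega)}$. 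This removes any subtlety coming from the quadrature on the leading block.

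Next I would verify the two structural hypotheses. The operator $\mathbf{b}_h \mapsto \langle\partial_\mathbf{b} w(\mathbf{b}_h),\cdot\rangle_h$ is strongly monotone and Lipschitz on $Q_h$: for $\mathbf{b}_h,\tilde{\mathbf{b}}_h\in Q_h$, testing with $\mathbf{b}_h-\tilde{\mathbf{b}}_h$ and using the positivity of the quadrature weights together with \eqref{radu:eq:wa1} gives a lower bound $\ge \gamma\,|\mathbf{b}_h-\tilde{\mathbf{b}}_h|_h^2$ evaluated quadrature-wise, and since the integrand $\langle\partial_\mathbf{b} w(\mathbf{b}_h)-\partial_\mathbf{b} w(\tilde{\mathbf{b}}_h),\mathbf{b}_h-\tilde{\mathbf{b}}_h\rangle$ is not polynomial one argues pointwise at the quadrature nodes, yielding strong monotonicity with constant $\gamma$ in the $|\cdot|_h$-norm; Lipschitz continuity follows analogously from \eqref{radu:eq:wa2} and Cauchy--Schwarz for the quadrature form. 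For the inf-sup (LBB) condition on $b(\cdot,\cdot)$, I would observe that $\nabla\psi_h'\in Q_h$ for every $\psi_h'\in V_h$ (gradients of degree-$p$ continuous piecewise polynomials are degree-$(p-1)$ piecewise polynomials), so one may simply choose $\mathbf{b}_h'=\nabla\psi_h'$ to get $b(\psi_h',\nabla\psi_h')=\langle\nabla\psi_h',\nabla\psi_h'\rangle_h=\|\nabla\psi_h'\|_{L^2(\Omega)}^2$, which is exactly $|\psi_h'|_{H^1}^2$; combined with the quotient by $\mathbb{R}$ and the Poincaré inequality this gives the discrete inf-sup condition with a constant independent of further structure (and in fact equal to $1$ in the natural norms).

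With monotonicity, Lipschitz continuity, and the inf-sup condition in hand, existence and uniqueness follow from the standard result on nonlinear saddle-point problems: uniqueness of $\mathbf{b}_h$ on the kernel $\ker b = \{\mathbf{b}_h\in Q_h : \langle\mathbf{b}_h,\nabla\psi_h'\rangle_h=0\ \forall\psi_h'\in V_h\}$ by strong monotonicity (via the Browder--Minty / Zarantonello theorem applied to the reduced problem on the kernel), and then uniqueness and existence of $\psi_h$ via the inf-sup condition applied to the residual of the first equation, which lies in the polar of the kernel. I would phrase this concisely as "the arguments of \cite{radu:Engertsberger2023} transfer verbatim once the discrete inf-sup condition is established." The only mildly delicate point is the interaction between the quadrature form $\langle\cdot,\cdot\rangle_h$ and the monotonicity bound for the genuinely nonlinear term; this is the step I expect to be the main obstacle, but it is handled cleanly by the observation above that positivity of the weights lets one pass the inequalities through node-by-node, so no polynomial-exactness of the nonlinear integrand is needed.
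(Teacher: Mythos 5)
Your proof is correct, and it takes a slightly different (though closely related) route from the paper. The paper's own argument exploits the variational structure: it observes that \eqref{radu:eq:11}--\eqref{radu:eq:12} are the first-order optimality conditions of the constrained minimization of $J_h(\mathbf{b}_h)=\langle w(\mathbf{b}_h),1\rangle_h - \langle \mathbf{h}_s,\mathbf{b}_h\rangle_h$ over the discrete divergence-free set, gets a unique minimizer $\mathbf{b}_h$ from strict convexity and coercivity of $J_h$, and then obtains the unique multiplier $\psi_h$ from the ``compatibility of the discretization spaces,'' citing the linear theory. You instead work directly with the operator $\partial_\mathbf{b} w$ as a strongly monotone, Lipschitz map on $Q_h$ and invoke the nonlinear saddle-point/Browder--Minty machinery on the kernel of $b(\cdot,\cdot)$, plus an explicit inf-sup argument. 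The two are mathematically equivalent here since $\partial_\mathbf{b} w$ is a gradient and \eqref{radu:eq:wa1} is exactly strong convexity of $w$; your version is marginally more general (it would survive a non-potential constitutive law) and, more valuably, it fills in two points the paper leaves implicit: the inf-sup condition follows from the inclusion $\nabla V_h \subset Q_h$ with the explicit test choice $\mathbf{b}_h'=\nabla\psi_h'$, and the quadrature form is harmless because it is exact on $Q_h\times Q_h$ (degree $2p-2$) while the genuinely nonlinear terms are handled node-by-node using positivity of the weights. The paper's route buys brevity and a direct link to the energy functional \eqref{radu:eq:13} used later for the line search; yours buys explicitness about exactly which discrete structural facts are needed. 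One cosmetic remark: the observation $\nabla V_h\subset Q_h$ that you use for the inf-sup condition also shows that the constraint set is a genuine linear subspace on which $\langle\cdot,\cdot\rangle_h$ is definite, which is the same ``compatibility'' the paper appeals to.
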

\begin{proof}
Let us note that \eqref{radu:eq:11}--\eqref{radu:eq:12} corresponds to the first-order optimality condition for the constrained minimization problem
\begin{align}
&\min_{\mathbf{b}_h \in Q_h}  J_h(\mathbf{b}_h):=\langle w(\mathbf{b}_h),1\rangle_h - \langle \mathbf{h}_s, \mathbf{b}_h\rangle_h  \label{radu:eq:13}\\
& \qquad \qquad \quad  \,\text{s.t.} \quad \langle \mathbf{b}_h,\nabla \psi_h'\rangle_h=0 \quad  \forall \psi_h' \in V_h. \notag
\end{align}
Under our assumptions, the functional $J_h$ to be minimized is strictly convex and coercive. 
%
This guarantees the existence of a unique solution $\mathbf{b}_h \in Q_h$. The existence of a unique discrete Lagrange multiplier $\psi_h$ follows from the compatibility of the discretization spaces; see~\cite{radu:Boffi} for related results in the linear case. \qed
\end{proof}

\subsection*{Scalar and vector potential formulations}
For later reference let us briefly mention the basic form of the discretization schemes for the other approaches mentioned in the introduction. 
The approximation for the scalar potential problem reads: Find $\psi_h \in V_h$ such that 
\begin{align} \label{radu:eq:14}
\langle \partial_\mathbf{h} w_*( \mathbf{h}_s - \nabla \psi_h), \nabla \psi_h'\rangle_h &= 0 \qquad  \forall \psi_h' \in V_h.
\end{align}
The existence of a unique solution again follows by standard arguments~\cite{radu:Engertsberger2023}. 
In our numerical tests, we also compare to a discretization of the vector potential formulation, which reads: Find $\mathbf{a}_h \in W_h$ such that 
\begin{align} \label{radu:eq:15}
\langle \partial_\mathbf{b} w(\operatorname{curl} \mathbf{a}_h), \operatorname{curl} \mathbf{a}_h'\rangle_h &= \langle \mathbf{h}_s, \operatorname{curl} \mathbf{a}_h'\rangle_h \qquad  \forall a_h' \in W_h.
\end{align}
Here $W_h \subset H_0(\operatorname{curl};\Omega)$ is an appropriate sub-space of edge elements incorporating the required gauging conditions.
The existence of a unique discrete solution can then again be established by well-known arguments~\cite{radu:Heise1994}.

\section{Iterative solution algorithm}

For the iterative solution of \eqref{radu:eq:11}--\eqref{radu:eq:12}, we consider a Newton method with line search. Every step of this iteration has the form 
\begin{align} \label{radu:eq:16}
(\mathbf{b}_h^{n+1},\psi_h^{n+1}) &= (\mathbf{b}_h^{n},\psi_h^n) + \tau^n (\delta \mathbf{b}_h^n,\delta \psi_h^n)
\end{align}
with increments $(\delta \mathbf{b}_h,\delta \psi_h) \in Q_h \times V_h$  determined by the linearized systems
\begin{alignat}{2}
\langle \partial_{bb} w(\mathbf{b}_h^n) \delta \mathbf{b}_h^n, \mathbf{b}_h'\rangle_h + \langle \nabla \delta \psi_h^n,\mathbf{b}_h'\rangle_h &= -\langle \partial_b w(\mathbf{b}_h^n) + \nabla \psi_h^n -\mathbf{h}_s, \mathbf{b}_h'\rangle_h \label{radu:eq:17}\\
\langle \delta \mathbf{b}_h^n,\nabla \psi_h'\rangle_h &= -\langle \mathbf{b}_h^n,\nabla \psi_h\rangle_h . \label{radu:eq:18}
\end{alignat}
for all  test functions $\mathbf{b}_h' \in Q_h$ and $\psi_h' \in V_h$. 
With standard results about linear saddlepoint  problems~\cite{radu:Boffi}, one can show the following result. 
\begin{lemma}
For any given $(\mathbf{b}_h^n,\psi_h^n) \in Q_h \times V_h$, the system \eqref{radu:eq:17}--\eqref{radu:eq:18} has a unique solution $(\delta \mathbf{b}_h^n, \delta \psi_h^n) \in Q_h \times V_h$. 
Moreover, if $\langle \mathbf{b}_h^n,\nabla \psi_h'\rangle_h=0$ for all $\psi_h' \in V_h$, then one also has $\langle \delta \mathbf{b}_h^n,\nabla \psi_h'\rangle_h = 0$ for all $\psi_h' \in V_h$.
\end{lemma}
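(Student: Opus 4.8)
The plan is to recognize \eqref{radu:eq:17}--\eqref{radu:eq:18} as a linear saddle point problem on $Q_h \times V_h$ with bilinear forms
\[
a_n(\delta\mathbf{b},\mathbf{b}') := \langle \partial_{bb} w(\mathbf{b}_h^n)\,\delta\mathbf{b},\mathbf{b}'\rangle_h, \qquad
b(\mathbf{b}',\psi') := \langle \nabla\psi',\mathbf{b}'\rangle_h ,
\]
and to verify the two Brezzi conditions, namely coercivity of $a_n$ and an inf-sup condition for $b$; unique solvability of $(\delta\mathbf{b}_h^n,\delta\psi_h^n)$ then follows from the standard theory referenced in \cite{radu:Boffi}. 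A preliminary remark I would record first is that, since the quadrature rule has positive weights and is exact for polynomials of degree $\le 2p-2$, the discrete pairing $\langle\cdot,\cdot\rangle_h$ integrates the product of any two elements of $Q_h = P_{p-1}(\mathcal{T}_h)^d$ exactly; hence it is a genuine inner product on $Q_h$, one has $\|\cdot\|_h = \|\cdot\|_{L^2(\Omega)}$ on $Q_h$, and the Cauchy--Schwarz inequality is available for $\langle\cdot,\cdot\rangle_h$.

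For the coercivity and boundedness of $a_n$ I would differentiate the structural assumptions \eqref{radu:eq:wa1}--\eqref{radu:eq:wa2}: wherever $\partial_{bb} w(\mathbf{b})$ exists it is symmetric and satisfies $\gamma|\xi|^2 \le \langle\partial_{bb} w(\mathbf{b})\xi,\xi\rangle$ and $|\partial_{bb} w(\mathbf{b})\xi| \le L|\xi|$ for all $\xi \in \mathbb{R}^3$. Evaluating these inequalities at the quadrature nodes and summing against the positive weights yields $a_n(\delta\mathbf{b},\delta\mathbf{b}) \ge \gamma\|\delta\mathbf{b}\|_h^2$ and $|a_n(\delta\mathbf{b},\mathbf{b}')| \le L\|\delta\mathbf{b}\|_h\|\mathbf{b}'\|_h$, so $a_n$ is bounded and coercive on all of $Q_h$ — more than the coercivity on $\ker b$ that Brezzi would require.

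The inf-sup condition for $b$ is the only point carrying any content, and it is essentially trivialized by the compatibility of the spaces: since $\psi_h' \in V_h$ is a piecewise polynomial of degree $\le p$, its gradient $\nabla\psi_h'$ is a (fully discontinuous) piecewise polynomial of degree $\le p-1$, hence $\nabla\psi_h' \in Q_h$. Choosing $\mathbf{b}' = \nabla\psi_h'$ therefore gives $b(\nabla\psi_h',\psi_h') = \langle\nabla\psi_h',\nabla\psi_h'\rangle_h = \|\nabla\psi_h'\|_{L^2(\Omega)}^2$ together with $\|\nabla\psi_h'\|_h = \|\nabla\psi_h'\|_{L^2(\Omega)}$, and a Poincaré--Friedrichs inequality on $V_h \subset H^1(\Omega)/\mathbb{R}$ converts this into $\sup_{0\ne\mathbf{b}'\in Q_h} b(\mathbf{b}',\psi_h')/\|\mathbf{b}'\|_h \ge \|\nabla\psi_h'\|_{L^2(\Omega)} \ge \beta\,\|\psi_h'\|_{H^1}$ with $\beta>0$ independent of $h$. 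With coercivity and inf-sup established, the Brezzi theorem gives the unique solvability of \eqref{radu:eq:17}--\eqref{radu:eq:18}.

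Finally, the ``moreover'' part requires no work: taking an arbitrary test function $\psi_h' \in V_h$ in \eqref{radu:eq:18}, the right-hand side equals $-\langle\mathbf{b}_h^n,\nabla\psi_h'\rangle_h$, which vanishes by the hypothesis $\langle\mathbf{b}_h^n,\nabla\psi_h'\rangle_h = 0$, whence $\langle\delta\mathbf{b}_h^n,\nabla\psi_h'\rangle_h = 0$ for all $\psi_h' \in V_h$. I do not expect any genuine obstacle here; the only mild care needed is the bookkeeping around the quadrature, i.e. making sure $\langle\cdot,\cdot\rangle_h$ behaves as the $L^2$ inner product on $Q_h$ and on $\nabla V_h \subseteq Q_h$, after which the result is a direct application of the linear saddle point theory.
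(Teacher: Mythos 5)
Your proposal is correct and follows exactly the route the paper intends: it appeals to the Brezzi theory for linear saddle point problems, with coercivity of the $\partial_{bb}w$-form on all of $Q_h$ coming from \eqref{radu:eq:wa1}--\eqref{radu:eq:wa2} and the quadrature exactness, the inf-sup condition coming from the compatibility $\nabla V_h \subseteq Q_h$ plus Poincar\'e, and the \emph{moreover} part read off directly from \eqref{radu:eq:18}. The paper itself only cites \cite{radu:Boffi} for these standard arguments, so your write-up is simply the fleshed-out version of the same proof.
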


\begin{remark}
Together with an appropriate line search strategy, one can now establish global convergence of the iteration \eqref{radu:eq:16}--\eqref{radu:eq:17} using well-known arguments from nonlinear optimization.
In our numerical tests, we use Armijo backtracking~\cite{radu:Kelley}, which allows to guarantee sufficient decay in the energy \eqref{radu:eq:13} in every step. 
If assuming the initial choice $\mathbf{b}_h^0=0$, global linear convergence with a mesh-independent convergence rate follows from the arguments in \cite{radu:Egger2024}. 
Similar results can also be proven for the other discretization strategies discussed above.
\end{remark}

\section{Efficient realization of the Newton step}
In every step of the proposed iterative method, a linearized discrete saddle point problem \eqref{radu:eq:17}--\eqref{radu:eq:18} has to be solved. 
On the algebraic level, it has the form
\begin{align}
\begin{pmatrix*}
A & \, \, B^\top \\
B & 0 
\end{pmatrix*}
\begin{pmatrix}
db \\ d\psi 
\end{pmatrix}
= 
\begin{pmatrix}
f \\ g
\end{pmatrix}
\end{align}
To avoid misunderstanding, we use slightly different notation for the coordinate vectors $(db,d\psi)$ than for the corresponding functions $(\delta \mathbf{b}_h^n,\delta \psi_h^n)$. 
Due to our assumptions, the matrix $A$ can be shown to be positive definite, and since the finite element space $Q_h$ carries no continuity properties, it has a block-diagonal structure. 
One can thus efficiently assemble the Schur complement system 
\begin{align}
B A^{-1} B^\top d\psi &= B A^{-1} f - g
\end{align}
whose system matrix $S=B A^{-1} B^\top$ corresponds to the stiffness matrix of a standard finite element discretization for the Poisson problem. In particular, it has the same properties as that of the discretized and linearized scalar potential problem~\eqref{radu:eq:14}. 
In a second step, one can determine the second component 
\begin{align}
db = A^{-1} (f - B^\top d\psi),  
\end{align}
which can again be computed efficiently, since $A^{-1}$ is still block diagonal. In summary, 
the Newton-update $(db,d\psi)$ can be computed with a computational effort comparable to that of the standard scalar potential formulation.

\section{Numerical results}

To illustrate our theoretical results, we now present some computational tests in which we compare the proposed methods with the standard scalar- and vector potential formulations. 
As a test problem, we use Problem~13 of the Compumag TEAM suite~\cite{radu:TEAM13}. The geometry for this problem consists of a coil surrounded by steel plates; see Figure~\ref{radu:fig:t13} for a sketch.
\begin{figure}
    \centering
    \includegraphics[scale=0.35]{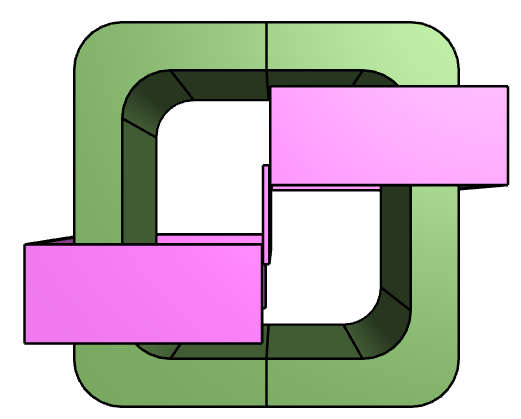}
    \caption{Bird's eye view of the geometry of TEAM problem 13. Pink volumes are the steel plates; the green area is the coil. The ambient air box is hidden.}
    \label{radu:fig:t13}
\end{figure}

\subsubsection*{Material laws.} 
In the air and the coil, the magnetic response is modeled by the linear material relation $\mathbf{b}=\mu_0 \mathbf{h}$, where $\mu_0$ is the vacuum permeability.
This corresponds to $w(\mathbf{b})=\frac{2}{2\mu_0}|\mathbf{b}|^2$ respectively $w_*(\mathbf{h})=\frac{\mu_0}{2} |\mathbf{h}|^2$. 
For the steel plates we us an isotropic nonlinear relation $ w(\mathbf{b})=\widetilde w(|\mathbf{b}|)$. Then $\widetilde w'(|\mathbf{b}|) = |\mathbf{h}|$ can be obtained from the B-H data provided in \cite[Figure~3]{radu:TEAM13}. 
The functions $\widetilde w(|\mathbf{b}|)$ and $\widetilde w''(|\mathbf{b}|)$ which are required to compute $w(\mathbf{b})$ and $\partial_{\mathbf{bb}} w(\mathbf{b})$ for the nonlinear iteration can then be obtained by integration resp. differentiation. 
Validity of the conditions \eqref{radu:eq:wa1}--\eqref{radu:eq:wa2} can be guaranteed explicitly by this procedure. 
The setup of the coenergy density $w_*(\mathbf{h})=\widetilde w_*(|\mathbf{h}|)$ is done in a similar manner. 
The solution of our test problem is depicted in Figure~\ref{radu:fig:2}. 
\begin{figure}
    \centering
    \includegraphics[scale=0.38]{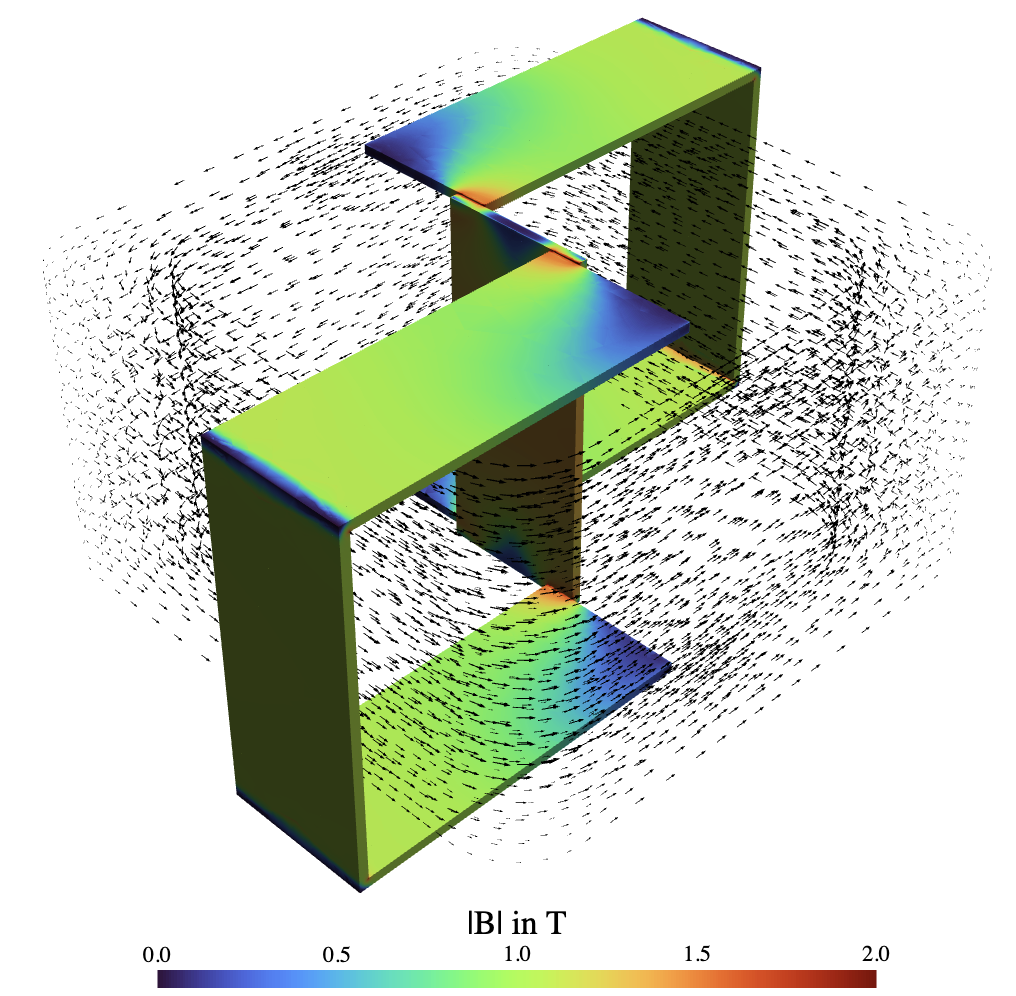}
    \caption{Solution to the magnetostatic problem using our mixed approach. Black arrows depict the direction of the current in the coil. Scalar value is the magnitude of the $\mathbf{b}$-field. (computed using \texttt{Netgen/NGSolve} and displayed using \texttt{PyVista}. }
    \label{radu:fig:2}
\end{figure}

\subsubsection*{Implementation.} 
We conduct a series of numerical tests using the vector and scalar potential formulations and the method proposed in this paper. 
Results for different mesh sizes $h$ and polynomial degree $p$ are presented. 
Since singularities occur at material interfaces, we refine the mesh towards the respective faces and edges using local mesh sizes $h_f=2^{-3} \cdot h$ and $h_e = 2^{-4} \cdot h$. The global mesh size is chosen as $h = 2^{-k}$, $k=2,3,\ldots$. The corresponding meshes are generated independently from each other. 
All tests were implemented in  \texttt{Netgen/Ngsolve}~\cite{ngsolve}. 
The same nonlinear solution strategy, i.e. a Newton method with Armijo line search, was used for all methods with the same parameters and stopping tolerances. 
The linear systems arising in every Newton step were solved with the conjugate gradient method with Jacobi preconditioner. Again, the same tolerances were used for stopping these iterations.

\subsubsection*{Numerical results.}
In Table~\ref{radu:tab:iter}, we report on the iteration numbers of the nonlinear solver required for the different methods and for different choices of the excitation current, the mesh sizes $h$, and the polynomial degrees $p$.
\begin{table}[ht!]
\centering
\setlength\tabcolsep{1.5ex}
\renewcommand{\arraystretch}{1.1}

\begin{tabular}{|c||c|c|c|c||}
\multicolumn{5}{c}{1000 ampere-turns} \\
\hline
$h$ &$p=1$ & $p=2$ & $p=3$ & $p=4$  \\
\hline
\hline
$2^{-2}$ & $8\,|\,9\,|\,8$ & $6\,|\,14\,|\,8$ & $7\,|\,13\,|\,8$ & $7\,|\,15\,|\,8$ \\ 
$2^{-3}$ & $8\,|\,12\,|\,8$ & $6\,|\,16\,|\,7$ & $7\,|\,15\,|\,8$ & $7\,|\,14\,|\,8$ \\ 
$2^{-4}$ & $9\,|\,15\,|\,7$ & $7\,|\,19\,|\,8$ & $7\,|\,16\,|\,8$ & $7\,|\,15\,|\,8$ \\ 
$2^{-5}$ & $7\,|\,17\,|\,7$ & $7\,|\,16\,|\,8$ & $7\,|\,20\,|\,8$ & $7\,|\,19\,|\,8$ \\ 
$2^{-6}$ & $7\,|\,15\,|\,7$ & $7\,|\,23\,|\,7$ & $7\,|\,22\,|\,8$ & $8\,|\,18\,|\,9$ \\
\hline 
\end{tabular}

\smallskip 

\begin{tabular}{|c||c|c|c|c||}
\multicolumn{5}{c}{3000 ampere-turns} \\
\hline
$h$ &$p=1$ & $p=2$ & $p=3$ & $p=4$  \\
\hline
\hline
$2^{-2}$ &  $8\,|\,13\,|\,8$  & $9\,|\,18\,|\,7$ & $8\,|\,16\,|\,9$ & $8\,|\,16\,|\,9$ \\ 
$2^{-3}$ & $8\,|\,14\,|\,8$ & $9\,|\,13\,|\,8$ & $8\,|\,18\,|\,8$ & $8\,|\,16\,|\,8$  \\ 
$2^{-4}$ & $8\,|\,12\,|\,8$ & $8\,|\,14\,|\,7$ & $8\,|\,15\,|\,8$ & $8\,|\,20\,|\,8$ \\ 
$2^{-5}$ & $9\,|\,17\,|\,7$ & $8\,|\,21\,|\,7$ & $8\,|\,27\,|\,8$ & $8\,|\,21\,|\,8$ \\ 
$2^{-6}$ & $8\,|\,21\,|\,7$ & $8\,|\,21\,|\,8$ & $8\,|\,20\,|\,8$ & $8\,|\,20\,|\,8$ \\
\hline
\end{tabular}
\vspace{1em}
\caption{Number of iterations for the vector potential formulation, the scalar potential approach, and the proposed modified scalar potential approach  (left to right). \label{radu:tab:iter} }
\end{table}
We observe that in general, the iteration numbers are more or less independent of the discretization parameters; see \cite{radu:Egger2024} for explanation. %
The two energy-based methods (left, right) require only about $7$--$8$ iterations to reach the desired tolerance, while the scalar potential formulation (middle), which is based on the coenergy functional, requires approximately twice as many. This is in perfect agreement with the observations made in \cite{radu:Dular2020}.
Let us note that, for the same mesh and polynomial order, the vector potential formulation has about $3$--$6$ times as many degrees of freedom as the corresponding scalar potential formulation, and in our computations, the solution of the linear systems, therefore, takes $3$--$6$ times longer.
For the proposed modified scalar potential approach, the linear solves can be done at the same complexity as for the standard scalar potential formulations. The number of Newton iterations, and hence also the total computational complexity, is reduced by about one half. 

\vspace*{-0.0em}

\subsection*{Acknowledgement}
The authors are grateful for the financial support from the international FWF/DFG-funded Collaborative Research Centre CREATOR (TRR361/SFB-F90).


\begin{thebibliography}{10}

\bibitem{radu:Albanese1997}
{\sc R.~Albanese and G.~Rubinacci}, {\em Finite element methods for the
  solution of {3D} eddy current problems}, Elsevier, 1997, pp.~1--86.

\bibitem{radu:Boffi}
{\sc D.~Boffi, F.~Brezzi, and M.~Fortin}, {\em Mixed finite element methods and
  applications}, Springer, Heidelberg, 2013.

\bibitem{radu:Ciarlet}
{\sc P.~G. Ciarlet}, {\em The finite element method for elliptic problems},
  vol.~40 of Classics in Applied Mathematics, SIAM, Philadelphia, PA, 2002.

\bibitem{radu:TEAM13}
{\sc Compumag}, {\em Team problem 13,
  \texttt{\url{www.compumag.org/wp/team/}}}.

\bibitem{radu:Dular2020}
{\sc J.~Dular, C.~Geuzaine, and B.~Vanderheyden}, {\em Finite-element
  formulations for systems with high-temperature superconductors}, IEEE Trans.
  Appl. Supercond., 30 (2020), pp.~1--13.

\bibitem{radu:Dular1997}
{\sc P.~Dular, J.-F. Remacle, F.~Henrotte, A.~Genon, and W.~Legros}, {\em
  Magnetostatic and magnetodynamic mixed formulations compared with
  conventional formulations}, IEEE Trans. Magn., 33 (1997), pp.~1302--1305.

\bibitem{radu:Egger2024}
{\sc H.~Egger, F.~Engertsberger, and B.~Radu}, {\em Global convergence of
  iterative solvers for problems of nonlinear magnetostatics},
  arXiv:2403.18520,  (2024).

\bibitem{radu:Engertsberger2023}
{\sc F.~Engertsberger}, {\em The Scalar Potential Approach in Nonlinear
  Magnetostatics}.
\newblock MSc Thesis, JKU Linz, 2023.

\bibitem{radu:Heise1994}
{\sc B.~Heise}, {\em Analysis of a fully discrete finite element method for a
  nonlinear magnetic field problem}, SIAM J. Numer. Anal., 31 (1994),
  pp.~745--759.

\bibitem{radu:Hiptmair2002}
{\sc R.~Hiptmair}, {\em Finite elements in computational electromagnetism},
  Acta Numerica, 11 (2002), pp.~237--339.

\bibitem{radu:Kelley}
{\sc C.~T. Kelley}, {\em Solving nonlinear equations with {N}ewton's method},
  Fundamentals of Algorithms, SIAM, 2003.

\bibitem{radu:Meunier}
{\sc G.~Meunier}, {\em The Finite Element Method for Electromagnetic Modeling},
  ISTE~Wiley, 2008.

\bibitem{radu:Monk}
{\sc P.~Monk}, {\em Finite element methods for {M}axwell's equations}, Oxford
  University Press, New York, 2003.

\bibitem{ngsolve}
{\sc Netgen/NgSolve}.
\newblock https://ngsolve.org/.

\bibitem{radu:Nedelec1980}
{\sc J.~C. Nédélec}, {\em Mixed finite elements in $\mathbb{R}^3$}, Numer.
  Math., 35 (1980), pp.~315--341.

\bibitem{radu:Silvester1991}
{\sc P.~P. Silvester and R.~P. Gupta}, {\em Effective computational models for
  anisotropic soft {B-H} curves}, IEEE Trans. Magn., 27 (1991), pp.~3804--3807.

\end{thebibliography}
\end{document}